\documentclass[a4paper,11pt]{amsart}

\usepackage[margin=1.25in]{geometry}
\usepackage{amsmath,amssymb,amsfonts,amsthm}
\usepackage{enumerate}
\usepackage{array}
\usepackage[colorlinks=true,linkcolor=blue,citecolor=blue,urlcolor=blue]{hyperref}

\numberwithin{equation}{section}

\theoremstyle{plain}
\newtheorem{theorem}{Theorem}[section]
\newtheorem{proposition}[theorem]{Proposition}
\newtheorem{corollary}[theorem]{Corollary}
\theoremstyle{definition}
\newtheorem{definition}[theorem]{Definition}
\theoremstyle{remark}
\newtheorem{remark}[theorem]{Remark}

\newcommand{\K}{\mathsf K}
\newcommand{\Cat}{\mathsf{Cat}}
\newcommand{\Set}{\mathsf{Set}}
\newcommand{\GR}{\mathsf R}
\newcommand{\GL}{\mathsf L}
\newcommand{\GJ}{\mathsf J}
\newcommand{\GH}{\mathsf H}
\newcommand{\GD}{\mathsf D}
\newcommand{\Mor}{\mathrm{Mor}}
\newcommand{\Hom}{\mathrm{Hom}}
\newcommand{\id}{\mathrm{id}}
\newcommand{\B}{\mathbf B}
\newcommand{\one}{\mathbf 1}

\title{Comparison categories: the normal form and the regularity of comparison data}
\author{Nizar El Idrissi}
\address{Department of Mathematics, Faculty of Sciences, Ibn Tofail University, Kenitra, Morocco}
\email{nizar.elidrissi@uit.ac.ma}
\subjclass[2020]{18A99, 18N10, 18D12, 20M10, 54E50}
\keywords{equivalence relation, category, 2-category, comparison category, Grothendieck construction, Green's relations, orbit relations, conjugacy, regularity of comparison data}

\begin{document}

\begin{abstract}
Many equivalence relations of mathematics (Green's relations on a semigroup, orbit relations of group actions, conjugacy) are defined by transforming a morphism on its two sides. We put them in one framework. A \emph{comparison datum} produces a category, the \emph{comparison category}, whose arrows are the allowed transformations, and two morphisms are related when they can be compared in both directions. Its \emph{normal form} is a Grothendieck construction, so every such relation is mutual reachability for an action; Green's relations and orbit relations are worked out in detail from it. If the way morphisms are transported is an arbitrary function, every relation occurs, so the real constraint is the \emph{regularity} of the transport. For completely functorial data we show that the relations obtained contain Green's relations, congruences, conjugacy and twisted conjugacy, we describe them on finite groups through double cosets, and we prove a dichotomy on nil monoids.
\end{abstract}

\maketitle

\section{Introduction}

\noindent\textbf{The problem in plain words.} To classify objects one declares some of them to be ``the same''. Very often the rule is given by a transformation. Two matrices are the same if one is obtained from the other by a change of basis. Two elements of a monoid are the same if each divides the other, which is how Green's relations are defined \cite{Green1951,Howie1995}. Two elements of a group are the same if they are conjugate. Each of these relations is usually introduced by its own formula, and the formulas look unrelated. This paper shows that they are one construction.

\medskip
\noindent\textbf{The idea.} Think of a morphism $m:A\to B$ of a category $\mathcal C$ as the object to be classified. We may modify $m$ on its target side by a morphism $u_1$ and on its source side by a morphism $u_2$, which gives $u_1mu_2$. We say that $m$ \emph{can be compared} to $\tilde m$ if some admissible modification of $m$ \emph{matches} $\tilde m$. In the simplest case matching means equality, $\tilde m=u_1mu_2$. In general the matching takes place in an auxiliary $2$-category $\mathcal D$, into which $m$ and the modifications are transported by three maps $\sigma,\tau_1,\tau_2$, and it is allowed to hold only up to an admissible $2$-morphism. The data $\mathfrak D=(\mathcal C,\mathcal D,\sigma,\tau_1,\tau_2;\mathcal U_1,\mathcal U_2,\mathcal W)$ form a \emph{comparison datum} (Definition \ref{definition-master}). All comparisons are the arrows of a category $\K(\mathfrak D)$, the \emph{comparison category} (Proposition \ref{proposition-K-category}), and the relation we study is
\[
m\simeq\tilde m\iff \text{there is a comparison }m\to\tilde m\text{ and a comparison }\tilde m\to m .
\]
The following dictionary shows how the classical relations appear; they are treated in Sections \ref{section-green}, \ref{section-transport} and \ref{section-regularity}.
\begin{center}
\footnotesize
\begin{tabular}{>{\raggedright\arraybackslash}p{4.2cm}>{\raggedright\arraybackslash}p{7.6cm}}
\hline
Relation & Which comparisons are allowed \\ \hline
Green's $\GR$ (resp.\ $\GL$) on a monoid $M$ & $\tilde m=mu_2$ (resp.\ $\tilde m=u_1m$), no transport \\
Green's $\GJ$ & $\tilde m=u_1mu_2$, no transport \\
Green's $\GH$ & $\tilde m=u_1m=mu_2$ with \emph{the same} $\tilde m$, via a product trick (Prop.\ \ref{proposition-functorial-list}) \\
Orbit relation of an action & $\tilde x=x\cdot w$, observations adjoined as morphisms (Cor.\ \ref{corollary-orbits}) \\
(Twisted) conjugacy in a group & $\tilde m=\varphi(g)^{-1}mg$, a constraint carried by $\mathcal D$ (Thm.\ \ref{theorem-constrained}) \\ \hline
\end{tabular}
\end{center}

\medskip
\noindent\textbf{Main results.}
\begin{itemize}
\item[(A)] \emph{Normal form} (Theorem \ref{theorem-normal-form}). $\K(\mathfrak D)$ is a full subcategory of a Grothendieck construction $\int_VH$, the category of elements of an action. So $m\simeq\tilde m$ is always \emph{mutual reachability for an action, read on the observations $s(m)$}. Moreover, every small category occurs, up to opposite, as a full subcategory of some $\K(\mathfrak D)$ (Corollary \ref{corollary-shape}); the shape of $\K(\mathfrak D)$ is unconstrained.
\item[(B)] \emph{Green's relations} and \emph{orbit relations} are computed from the normal form, with complete proofs (Sections \ref{section-green}, \ref{section-transport}).
\item[(C)] \emph{Regularity.} If $\sigma$ is an arbitrary function, every equivalence relation on a strongly connected category occurs (Proposition \ref{proposition-universality}), so nothing is learned; the constraint lies in how regular $\sigma,\tau_1,\tau_2$ are. For \emph{completely functorial} data (all three are functors) we prove: they contain Green's relations, congruences, conjugacy and twisted conjugacy (Proposition \ref{proposition-functorial-list}, Theorem \ref{theorem-constrained}); on a finite group they are exactly the pullbacks along homomorphisms of double coset relations (Proposition \ref{proposition-finite}); and on a nil monoid they obey a dichotomy (Theorem \ref{theorem-nil}), which separates them from arbitrary $\sigma$.
\end{itemize}

\medskip
\noindent\textbf{What is left out.} The relation $\simeq$ is the lowest rung of a ladder. Higher rungs record connected components of $\K(\mathfrak D)$, then the homotopy groups of its classifying space (``in how many essentially different ways'' two morphisms are comparable); we say only a few words about them in Remark \ref{remark-levels}. Relations defined by a limit (orbit closures, Cauchy sequences) and by stabilization (stable isomorphism) are also obtained from comparison data; they are described in a few sentences in Section \ref{section-brief}, together with the examples and counterexamples that illustrate the theory.

\medskip
\noindent\textbf{Reader's guide and conventions.} The reader needs only categories and functors; $2$-categories are recalled in Section \ref{section-master}. Composition is written from right to left, so $u_1mu_2$ means ``first $u_2$, then $m$, then $u_1$''. A monoid $M$ is regarded as the category $\B M$ with one object whose morphisms are the elements of $M$; then any two morphisms are parallel and composable. A subcategory is \emph{wide} if it has all the objects. Sections \ref{section-master}--\ref{section-normal} fix the framework, Sections \ref{section-green} and \ref{section-transport} are independent applications, and Section \ref{section-regularity} contains the main new results.

\section{Comparison data and the comparison category}
\label{section-master}

\noindent We first recall what we need about $2$-categories \cite{MacLane1971}. A \emph{$2$-category} $\mathcal D$ has objects, $1$-morphisms $p:X\to Y$ (which compose as in a category) and, for parallel $1$-morphisms $p,q:X\to Y$, \emph{$2$-morphisms} $\alpha:p\Rightarrow q$. Two $2$-morphisms $\alpha:p\Rightarrow q$ and $\beta:q\Rightarrow r$ compose \emph{vertically} to $\beta\circ_v\alpha:p\Rightarrow r$. A $2$-morphism can also be \emph{whiskered}: for $1$-morphisms $a:Y\to Z$ and $b:X'\to X$ there is a $2$-morphism $a\alpha b:apb\Rightarrow aqb$, which is ``$\alpha$ with $a$ composed on the left and $b$ on the right''. Whiskering by a composite is iterated whiskering, $(a'a)\alpha(bb')=a'(a\alpha b)b'$, and it commutes with vertical composition, $a(\beta\circ_v\alpha)b=(a\beta b)\circ_v(a\alpha b)$ (these are consequences of the interchange law). The prototype is the $2$-category of categories, functors and natural transformations. A category is the special case with identity $2$-morphisms only; the reader may keep this case in mind throughout, since the main examples below are of this kind.

\begin{definition}\label{definition-master}
A \textbf{comparison datum} $\mathfrak D=(\mathcal C,\mathcal D,\sigma,\tau_1,\tau_2;\mathcal U_1,\mathcal U_2,\mathcal W)$ consists of a category $\mathcal C$; a $2$-category $\mathcal D$; functors $\tau_1,\tau_2:\mathcal C\to\mathcal D$; a function $\sigma$ with $\sigma(A)=\tau_1(A)=\tau_2(A)$ on objects and $\sigma:\Hom(A,B)\to\Hom(\sigma A,\sigma B)$ on morphisms, which is \textbf{not} required to be a functor; wide subcategories $\mathcal U_1,\mathcal U_2\subseteq\mathcal C$ (the admissible modifications on the target and on the source side); and a class $\mathcal W$ of $2$-morphisms of $\mathcal D$ (the admissible $2$-morphisms) which contains the identities and is closed under vertical composition and whiskering. The datum is \textbf{strict} if $\mathcal W$ consists of the identities, and \textbf{completely functorial} if $\sigma$ is also a functor.
\end{definition}

The roles are as follows. A morphism $m$ of $\mathcal C$ is the object to be classified, and $\sigma(m)$ is its image in $\mathcal D$, where the comparison takes place; $\sigma$ may forget structure, which is why it need not be a functor. The functors $\tau_1,\tau_2$ transport the modifications. For $m:A\to B$, $u_1:B\to\tilde B$ and $u_2:\tilde A\to A$ put
\[
\Theta_{u_1,u_2}(m):=\tau_1(u_1)\,\sigma(m)\,\tau_2(u_2):\ \sigma\tilde A\xrightarrow{\ \tau_2(u_2)\ }\sigma A\xrightarrow{\ \sigma(m)\ }\sigma B\xrightarrow{\ \tau_1(u_1)\ }\sigma\tilde B .
\]
It is parallel to $\sigma(\tilde m)$ whenever $\tilde m:\tilde A\to\tilde B$, so it makes sense to ask for a $2$-morphism $\Theta_{u_1,u_2}(m)\Rightarrow\sigma(\tilde m)$.

\begin{definition}\label{definition-K}
The \textbf{comparison category} $\K(\mathfrak D)$ has the morphisms of $\mathcal C$ as objects. An arrow $m\to\tilde m$ is a triple $(u_1,u_2,\Phi)$ with $u_1\in\mathcal U_1$, $u_2\in\mathcal U_2$, $u_1:B\to\tilde B$, $u_2:\tilde A\to A$, and $\Phi:\Theta_{u_1,u_2}(m)\Rightarrow\sigma(\tilde m)$ a $2$-morphism in $\mathcal W$. Composition and identities are
\[
(u_1',u_2',\Phi')\circ(u_1,u_2,\Phi)=\bigl(u_1'u_1,\;u_2u_2',\;\Phi'\circ_v(\tau_1(u_1')\,\Phi\,\tau_2(u_2'))\bigr),\qquad (\id,\id,1_{\sigma(m)}).
\]
\end{definition}

\begin{proposition}\label{proposition-K-category}
$\K(\mathfrak D)$ is a category.
\end{proposition}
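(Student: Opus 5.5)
We verify directly the three axioms of a category: composition is well defined, identities are two-sided units, and composition is associative. The only tools needed are functoriality of $\tau_1,\tau_2$, the closure properties of $\mathcal W$, and the two whiskering identities recalled at the start of Section \ref{section-master}.

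\emph{Well-definedness.} Let $(u_1,u_2,\Phi):m\to\tilde m$ and $(u_1',u_2',\Phi'):\tilde m\to\hat m$ be arrows. The component $u_1'u_1$ lies in $\mathcal U_1$ and $u_2u_2'$ lies in $\mathcal U_2$, since these are subcategories, and they have the right sources and targets. Since $\tau_1,\tau_2$ are functors, the whiskered $2$-morphism $\tau_1(u_1')\Phi\tau_2(u_2')$ has source
\[
\tau_1(u_1')\tau_1(u_1)\sigma(m)\tau_2(u_2)\tau_2(u_2')=\Theta_{u_1'u_1,u_2u_2'}(m)
\]
and target $\Theta_{u_1',u_2'}(\tilde m)$, which is the source of $\Phi'$. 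So the vertical composite is defined and goes $\Theta_{u_1'u_1,u_2u_2'}(m)\Rightarrow\sigma(\hat m)$. It lies in $\mathcal W$ because $\mathcal W$ is closed under whiskering and vertical composition. The identity triple is legitimate: $\id\in\mathcal U_i$ because the subcategories are wide, $\Theta_{\id,\id}(m)=\sigma(m)$ because $\tau_i$ preserve identities, and $1_{\sigma(m)}\in\mathcal W$.

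\emph{Unit laws.} Composing $(u_1,u_2,\Phi)$ with identities on either side gives first components $u_1\id=\id u_1$ and $u_2\id=\id u_2$. For the $2$-morphism component on the left we get $1_{\sigma(\tilde m)}\circ_v(\id\,\Phi\,\id)=\Phi$, since whiskering by identity $1$-morphisms is trivial. On the right we get $\Phi\circ_v(\tau_1(u_1)1_{\sigma(m)}\tau_2(u_2))=\Phi\circ_v 1_{\Theta_{u_1,u_2}(m)}=\Phi$, using that the whiskering of an identity $2$-morphism is an identity. Both facts are standard consequences of the $2$-category axioms.

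\emph{Associativity.} This is the main computation. Take three composable arrows $(u_1,u_2,\Phi)$, $(u_1',u_2',\Phi')$, $(u_1'',u_2'',\Phi'')$. The $1$-morphism components agree by associativity in $\mathcal C$. For the $2$-morphism components, bracketing the composite on the left gives
\[
\Phi''\circ_v\tau_1(u_1'')\bigl(\Phi'\circ_v \tau_1(u_1')\Phi\tau_2(u_2')\bigr)\tau_2(u_2'').
\]
We apply the rule that whiskering commutes with vertical composition, then iterated whiskering $(a'a)\alpha(bb')=a'(a\alpha b)b'$, together with $\tau_1(u_1''u_1')=\tau_1(u_1'')\tau_1(u_1')$ and $\tau_2(u_2'u_2'')=\tau_2(u_2')\tau_2(u_2'')$. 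This turns the expression into
\[
\Phi''\circ_v\bigl(\tau_1(u_1'')\Phi'\tau_2(u_2'')\bigr)\circ_v\bigl(\tau_1(u_1''u_1')\Phi\tau_2(u_2'u_2'')\bigr).
\]
Bracketing on the right yields the same triple vertical composite directly. The two therefore agree by associativity of vertical composition.

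The main obstacle is only this bookkeeping. One has to make sure the functoriality of $\tau_1,\tau_2$ is used so that whiskering by $\tau_1(u_1'')\tau_1(u_1')$ matches whiskering by $\tau_1(u_1''u_1')$. It is precisely here that $\tau_1,\tau_2$ must be functors, whereas $\sigma$ need not be, since $\sigma$ is applied only to the objects $m$ of $\K(\mathfrak D)$ and never to composites.
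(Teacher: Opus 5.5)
Your proof is correct and follows the paper's argument essentially step by step: you establish well-definedness from the functoriality of $\tau_1,\tau_2$ and the closure properties of $\mathcal W$, and you get associativity by reducing both bracketings to the same triple vertical composite with the two whiskering identities. Your handling of the unit laws, via whiskering by identity $1$-morphisms and whiskering of identity $2$-morphisms, is slightly more explicit than the paper's ``for the same reasons''.
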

\begin{proof}
\emph{The composite is well defined.} Let $m:A\to B$, $\tilde m:\tilde A\to\tilde B$, $\hat m:\hat A\to\hat B$ and let $(u_1,u_2,\Phi):m\to\tilde m$, $(u_1',u_2',\Phi'):\tilde m\to\hat m$ be arrows. Then $u_1'u_1:B\to\hat B$, and $u_2u_2':\hat A\to A$ (the order is reversed on the source side). Whiskering $\Phi:\Theta_{u_1,u_2}(m)\Rightarrow\sigma(\tilde m)$ by $\tau_1(u_1')$ on the left and $\tau_2(u_2')$ on the right gives
\[
\tau_1(u_1')\Theta_{u_1,u_2}(m)\tau_2(u_2')\Longrightarrow\tau_1(u_1')\sigma(\tilde m)\tau_2(u_2')=\Theta_{u_1',u_2'}(\tilde m).
\]
Since $\tau_1,\tau_2$ are functors, the source is $\tau_1(u_1'u_1)\sigma(m)\tau_2(u_2u_2')=\Theta_{u_1'u_1,u_2u_2'}(m)$, and the target is the source of $\Phi'$. So $\Phi'\circ_v(\tau_1(u_1')\Phi\tau_2(u_2'))$ is defined, is a $2$-morphism $\Theta_{u_1'u_1,u_2u_2'}(m)\Rightarrow\sigma(\hat m)$, and lies in $\mathcal W$ because $\mathcal W$ is closed under whiskering and vertical composition. Also $u_1'u_1\in\mathcal U_1$ and $u_2u_2'\in\mathcal U_2$ because these are subcategories. The identity $(\id,\id,1_{\sigma(m)})$ is an arrow because the $\tau_i$ preserve identities, so $\Theta_{\id,\id}(m)=\sigma(m)$, and $1_{\sigma(m)}\in\mathcal W$.

\emph{Associativity and units.} For a third arrow $(u_1'',u_2'',\Phi'')$, both bracketings of the triple composite have first two components $u_1''u_1'u_1$ and $u_2u_2'u_2''$, and third component $\Phi''\circ_v(\tau_1(u_1'')\Phi'\tau_2(u_2''))\circ_v(\tau_1(u_1''u_1')\Phi\tau_2(u_2'u_2''))$. Indeed $\circ_v$ is associative; whiskering by a composite is iterated whiskering; and whiskering commutes with $\circ_v$. For the same reasons $(\id,\id,1_{\sigma(m)})$ is neutral.
\end{proof}

\begin{definition}\label{definition-relation}
Write $m\trianglelefteq\tilde m$ if $\Hom_{\K(\mathfrak D)}(m,\tilde m)\neq\emptyset$, and $m\simeq\tilde m$ if $m\trianglelefteq\tilde m$ and $\tilde m\trianglelefteq m$. Explicitly, $m\simeq\tilde m$ iff there are admissible $u_1,u_2,v_1,v_2$ and admissible $2$-morphisms $\Theta_{u_1,u_2}(m)\Rightarrow\sigma(\tilde m)$ and $\Theta_{v_1,v_2}(\tilde m)\Rightarrow\sigma(m)$.
\end{definition}

\begin{proposition}\label{proposition-preorder}
$\trianglelefteq$ is a preorder and $\simeq$ is an equivalence relation on $\Mor(\mathcal C)$. If $\K(\mathfrak D)$ is a groupoid, $\trianglelefteq$ is itself symmetric and equals $\simeq$.
\end{proposition}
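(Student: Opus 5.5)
The plan is to derive everything from Proposition \ref{proposition-K-category}: for any category, the relation ``there is an arrow $x\to y$'' is a preorder on its objects. Applying this to $\K(\mathfrak D)$, whose objects are the elements of $\Mor(\mathcal C)$, handles the first claim. I will not work with the explicit triples $(u_1,u_2,\Phi)$ except to name the arrows; the categorical axioms already checked carry the whole argument.

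\textbf{Reflexivity.} For each $m$, the identity arrow $(\id,\id,1_{\sigma(m)})$ of $\K(\mathfrak D)$ lies in $\Hom_{\K(\mathfrak D)}(m,m)$. Hence this set is nonempty and $m\trianglelefteq m$.

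\textbf{Transitivity.} Given $m\trianglelefteq\tilde m$ and $\tilde m\trianglelefteq\hat m$, choose arrows $f:m\to\tilde m$ and $g:\tilde m\to\hat m$. The composite $g\circ f$, which is well defined by Proposition \ref{proposition-K-category}, lies in $\Hom_{\K(\mathfrak D)}(m,\hat m)$, so $m\trianglelefteq\hat m$.

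\textbf{$\simeq$ is an equivalence relation.} This is the standard symmetrization of a preorder. Reflexivity and transitivity of $\simeq$ follow componentwise from those of $\trianglelefteq$. Symmetry holds because the definition of $\simeq$ is symmetric in $m$ and $\tilde m$.

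\textbf{The groupoid case.} Suppose every arrow of $\K(\mathfrak D)$ is invertible, and let $m\trianglelefteq\tilde m$ be witnessed by $f:m\to\tilde m$. Its inverse $f^{-1}:\tilde m\to m$ witnesses $\tilde m\trianglelefteq m$, so $\trianglelefteq$ is symmetric. It then coincides with its symmetrization: $m\simeq\tilde m$ always implies $m\trianglelefteq\tilde m$, and conversely $m\trianglelefteq\tilde m$ now gives $\tilde m\trianglelefteq m$, hence $m\simeq\tilde m$.

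There is no real obstacle here. The only point needing care is that well-definedness of composition and the fact that identities are arrows, including closure of $\mathcal W$ and of the $\mathcal U_i$, were already established in Proposition \ref{proposition-K-category}, so they may be quoted rather than reproved.
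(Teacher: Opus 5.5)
Your proof is correct and follows the paper's own argument: reflexivity from the identity arrows, transitivity from composition in $\K(\mathfrak D)$ (Proposition~\ref{proposition-K-category}), symmetry of $\simeq$ by definition, and inverse arrows in the groupoid case. You simply spell out steps the paper leaves implicit, such as transitivity of $\simeq$ componentwise and the equality of $\trianglelefteq$ with $\simeq$ once $\trianglelefteq$ is symmetric.
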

\begin{proof}
Reflexivity comes from the identity arrows and transitivity from composition (Proposition \ref{proposition-K-category}); symmetry of $\simeq$ is built into its definition. In a groupoid every arrow has an inverse arrow.
\end{proof}

\begin{remark}\label{remark-levels}
(Beyond the relation.) The relation $\simeq$ only asks whether comparisons exist, in both directions; it is the lowest (level -1) of a sequence of ``levels'' read from $\K(\mathfrak D)$. Level $0$ is the relation generated by comparisons in either direction, that is, the connected components of $\K(\mathfrak D)$; it is coarser than $\simeq$, and equal to it when $\K(\mathfrak D)$ is a groupoid. Levels $1$ and $2$ are the homotopy groups $\pi_1$ and $\pi_2$ of the classifying space of $\K(\mathfrak D)$ at $m$: level $1$ is the group of coherent self-comparisons of $m$ (for a group acting on a set it is the stabilizer), and level $2$ is an abelian group. These levels count \emph{in how many ways} two morphisms are comparable. For instance, for $\mathcal C$ a category with $\sigma=\tau_i=\id$ and $\mathcal U_i=\mathcal C$, all levels $\geq 1$ at any $m$ are the homotopy groups of $\mathcal C$ itself, and for the transport data of Section \ref{section-transport} they are the homotopy groups of a homotopy colimit \cite{Thomason1979}. We do not use these levels below.
\end{remark}

\section{The normal form}
\label{section-normal}

\noindent A comparison looks complicated, since it combines two modifications and a $2$-morphism. It is in fact a very classical object: an arrow of the category of elements of an action. We first build the action.

\begin{definition}\label{definition-normal}
Let $\mathfrak D$ be a comparison datum. Let $V$ be the category whose objects are the pairs $(A,B)$ of objects of $\mathcal C$, and whose arrows $(A,B)\to(\tilde A,\tilde B)$ are the pairs $(u_2,u_1)$ with $u_2:\tilde A\to A$ in $\mathcal U_2$ and $u_1:B\to\tilde B$ in $\mathcal U_1$, composed by $(u_2',u_1')(u_2,u_1)=(u_2u_2',u_1'u_1)$. Let $H:V\to\Cat$ send $(A,B)$ to the category $H(A,B)$ whose objects are the $1$-morphisms $\sigma A\to\sigma B$ of $\mathcal D$ and whose arrows are the $2$-morphisms of $\mathcal W$; it sends $(u_2,u_1)$ to the functor $p\mapsto\tau_1(u_1)\,p\,\tau_2(u_2)$, acting on $2$-morphisms by whiskering. For a morphism $m:A\to B$ of $\mathcal C$ put $s(m):=\bigl((A,B),\sigma(m)\bigr)$. The \textbf{Grothendieck construction} $\int_VH$ has as objects the pairs $(v,x)$ with $x\in H(v)$, and as arrows $(v,x)\to(v',x')$ the pairs $(a,\varphi)$ with $a:v\to v'$ in $V$ and $\varphi:H(a)x\to x'$ in $H(v')$, composed by $(a',\varphi')(a,\varphi)=(a'a,\ \varphi'\circ_vH(a')\varphi)$.
\end{definition}

\noindent In words: $V$ is a category that acts on the $1$-morphisms of $\mathcal D$ by modifying them on both sides, $H$ is this action, and $\int_VH$ is the category whose arrows say ``$x$ is moved by $a$ to something that is related by $\varphi$ to $x'$''. The functor property of $H$ is the functoriality of $\tau_1,\tau_2$, and $H(a)$ preserves the arrows of $\mathcal W$ because $\mathcal W$ is closed under whiskering. The point is that $V$ and $H$ depend on $\sigma$ only through the objects $\sigma A$; $\sigma$ itself enters only through the map $s$.

\begin{theorem}[Normal form]\label{theorem-normal-form}
For every comparison datum $\mathfrak D$ and all morphisms $m,\tilde m$ of $\mathcal C$,
\[
\Hom_{\K(\mathfrak D)}(m,\tilde m)=\Hom_{\int_VH}\bigl(s(m),s(\tilde m)\bigr),
\]
compatibly with composition and identities. Hence $m\mapsto s(m)$ is a fully faithful functor $\K(\mathfrak D)\to\int_VH$, an equivalence onto the full subcategory on the objects $s(m)$, and
\[
m\simeq\tilde m\iff s(m)\text{ and }s(\tilde m)\text{ are mutually reachable in }\textstyle\int_VH .
\]
\end{theorem}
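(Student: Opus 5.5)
The plan is to unfold both Hom-sets and check that they consist of literally the same data. First I would verify that $V$ is a category and $H$ a functor. Composition in $V$ is associative with identities $(\id,\id)$ because $\mathcal U_1,\mathcal U_2$ are wide subcategories. Each $H(A,B)$ is a category because $\mathcal W$ contains the identities and is closed under vertical composition. The assignment $p\mapsto\tau_1(u_1)p\tau_2(u_2)$ is a functor $H(A,B)\to H(\tilde A,\tilde B)$: whiskering preserves $\mathcal W$, commutes with $\circ_v$, and sends identity $2$-morphisms to identities. Functoriality of $H$ in $V$ follows from the functoriality of $\tau_1,\tau_2$ together with the rule that whiskering by a composite is iterated whiskering. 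Concretely,
\[
H(u_2',u_1')H(u_2,u_1)(p)=\tau_1(u_1')\tau_1(u_1)\,p\,\tau_2(u_2)\tau_2(u_2')=\tau_1(u_1'u_1)\,p\,\tau_2(u_2u_2'),
\]
and the same identity holds on $2$-morphisms. This is the same computation already made in the proof of Proposition \ref{proposition-K-category}.

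Next I would compare the Hom-sets for $m:A\to B$ and $\tilde m:\tilde A\to\tilde B$. An arrow $s(m)\to s(\tilde m)$ in $\int_VH$ is a pair $(a,\varphi)$ with $a=(u_2,u_1):(A,B)\to(\tilde A,\tilde B)$ and $\varphi:H(a)\sigma(m)\to\sigma(\tilde m)$ in $H(\tilde A,\tilde B)$. The first condition says exactly that $u_2:\tilde A\to A$ lies in $\mathcal U_2$ and $u_1:B\to\tilde B$ lies in $\mathcal U_1$. The second says that $\varphi$ is a $2$-morphism in $\mathcal W$ from $H(a)\sigma(m)=\Theta_{u_1,u_2}(m)$ to $\sigma(\tilde m)$. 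So $(u_1,u_2,\Phi)\mapsto((u_2,u_1),\Phi)$ is a bijection, and it is essentially the identity up to reordering the components.

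Compatibility with composition comes from matching the formulas. In $\int_VH$ the composite is $\bigl((u_2u_2',u_1'u_1),\ \Phi'\circ_v H(u_2',u_1')\Phi\bigr)$, and $H(u_2',u_1')\Phi=\tau_1(u_1')\Phi\tau_2(u_2')$. This is precisely the composite of Definition \ref{definition-K}. Identities correspond because $H(\id)=\id$ gives the arrow $(\id,1_{\sigma(m)})$.

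The consequences then follow formally. Since $s$ is injective on objects up to the obvious identification, it defines a functor that is bijective on Hom-sets, hence fully faithful, and therefore an isomorphism (in particular an equivalence) onto the full subcategory spanned by the objects $s(m)$. Mutual reachability of $s(m)$ and $s(\tilde m)$ means that both Hom-sets in $\int_VH$ are nonempty. By the equality of Hom-sets this is the same as $m\trianglelefteq\tilde m$ and $\tilde m\trianglelefteq m$, that is, $m\simeq\tilde m$ by Definition \ref{definition-relation}.

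The main obstacle is mild and is purely about bookkeeping, namely checking that $s$ really is a functor. Distinct morphisms $m\neq m'$ may have $s(m)=s(m')$ when $\sigma$ is not injective, so strictly $s$ is an identity-on-Homs functor whose image may identify objects. I would state the equality of Hom-sets as the primary content. Full faithfulness holds regardless, and the "equivalence onto the full subcategory" follows because, in that case, $m$ and $m'$ become isomorphic via identity arrows in $\K(\mathfrak D)$.
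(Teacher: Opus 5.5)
Your proposal is correct and follows essentially the same route as the paper: you unfold an arrow $s(m)\to s(\tilde m)$ of $\int_VH$ into a triple $(u_1,u_2,\Phi)$, match composites via $H(u_2',u_1')\Phi=\tau_1(u_1')\Phi\tau_2(u_2')$, and handle a non-injective $s$ by noting that $(\id,\id,1)$ is an isomorphism $m\to m'$ whenever $s(m)=s(m')$. The paper checks that $V$ is a category and $H$ a functor in the text before the theorem rather than in the proof. The only slip is the word ``isomorphism'' in your fourth paragraph, which holds only when $s$ is injective on objects, as your last paragraph acknowledges; the equivalence claim needs only full faithfulness together with surjectivity onto the objects $s(m)$.
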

\begin{proof}
An arrow $s(m)\to s(\tilde m)$ in $\int_VH$ is a pair $\bigl((u_2,u_1),\Phi\bigr)$ with $u_2:\tilde A\to A$ in $\mathcal U_2$, $u_1:B\to\tilde B$ in $\mathcal U_1$ and $\Phi:\tau_1(u_1)\sigma(m)\tau_2(u_2)\Rightarrow\sigma(\tilde m)$ in $\mathcal W$, that is, $\Phi:\Theta_{u_1,u_2}(m)\Rightarrow\sigma(\tilde m)$: this is exactly a triple $(u_1,u_2,\Phi)$ of Definition \ref{definition-K}. The composition of $\int_VH$ gives $(u_2u_2',u_1'u_1)$ and $\Phi'\circ_vH(u_2',u_1')\Phi$, and $H(u_2',u_1')\Phi=\tau_1(u_1')\Phi\tau_2(u_2')$, which is the composition of Definition \ref{definition-K}; the identities agree. If $s(m)=s(m')$ then $m,m'$ are parallel with $\sigma m=\sigma m'$, so $(\id,\id,1)$ is an isomorphism $m\to m'$; hence $\K(\mathfrak D)$ is equivalent to its image. The last statement is the definition of $\simeq$.
\end{proof}

\noindent For a strict datum the category $H(A,B)$ is a set, $V$ acts on the set $Y$ of all $1$-morphisms $\sigma A\to\sigma B$, and the theorem says: \emph{$m\simeq\tilde m$ iff $s(\tilde m)$ is in the $V$-orbit of $s(m)$ and $s(m)$ is in the $V$-orbit of $s(\tilde m)$}. So every relation in this paper is the pullback along $s$ of an orbit relation. The freedom of a datum lies in the map $s$, hence in $\sigma$. The next section shows how the category $V$ and the set $Y$ arise in concrete cases; first we record that the shape of $\K(\mathfrak D)$ is unconstrained.

\begin{corollary}\label{corollary-shape}
Every small category $\mathcal U$ is, up to opposite, a full subcategory of $\K(\mathfrak D)$ for a completely functorial strict datum $\mathfrak D$, and this subcategory is a union of connected components.
\end{corollary}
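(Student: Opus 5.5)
Take $\mathcal C=\mathcal D=\mathcal U$, regarded as a $2$-category with identity $2$-morphisms only, and set $\sigma=\tau_1=\tau_2=\id$, $\mathcal U_1=\mathcal U_2=\mathcal U$, $\mathcal W=$ identities. This datum is strict and completely functorial. An arrow $m\to\tilde m$ in $\K(\mathfrak D)$ is then a pair $(u_1,u_2)$ with $\tilde m=u_1mu_2$. Restrict to the identity morphisms $1_X$, $X\in\operatorname{Ob}\mathcal U$. An arrow $1_X\to 1_Y$ is a pair $(u_1,u_2)$ with $u_1:X\to Y$, $u_2:Y\to X$ and $u_1u_2=1_Y$. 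Such retraction data is not the same as $\Hom_{\mathcal U}(X,Y)$, so this naive choice fails. We must modify $\mathcal U_2$ so that the source side carries no information.

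For the modified datum, take $\mathcal U_1=\mathcal U$ and let $\mathcal U_2$ be the wide subcategory of identities. Keep $\sigma=\tau_i=\id$, strict. An arrow $m\to\tilde m$ is now a $u_1$ with $\tilde m=u_1m$, where $m,\tilde m$ have the same source. On the objects $1_X$ this still forces $\tilde m=u_1 1_X=u_1$, which is not an identity unless $u_1$ is. So we change $\mathcal C$ instead: let $\mathcal C=\mathcal U^{\triangleleft}$ be $\mathcal U$ with a freely adjoined initial object $0$. The new morphisms are the $!_X:0\to X$, with $u\,!_X=!_Y$ for $u:X\to Y$. Keep $\mathcal U_1=\mathcal C$ (or $\mathcal U$ plus identities) and $\mathcal U_2=$ identities. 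An arrow $!_X\to\,!_Y$ is a $u_1:X\to Y$ with $u_1!_X=!_Y$, and this always holds. So $\Hom_{\K}(!_X,!_Y)=\Hom_{\mathcal U}(X,Y)$, with composition $u_1'u_1$ and identity $\id$. Hence $X\mapsto\,!_X$ is a fully faithful functor $\mathcal U\to\K(\mathfrak D)$, covariant here; the ``up to opposite'' clause covers the dual choice of putting $\mathcal U$ on the source side. The data $\sigma,\tau_i$ are identity functors and $\mathcal W$ is trivial, so the datum is completely functorial and strict. We must also check that $\Hom(0,X)$ consists only of $!_X$ and that nothing maps into $0$ except $\id_0$. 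The first guarantees that the objects of $\K$ with source $0$ are exactly the $!_X$, and that no two are identified.

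For the union of connected components, the sources are fixed because $\mathcal U_2$ contains only identities. So any arrow of $\K$ into or out of $!_X$ joins it to a morphism with source $0$, hence to some $!_Y$. Thus the full subcategory on $\{!_X\}$ is closed under arrows in both directions, and so is a union of connected components.

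The main obstacle is the step just handled: choosing the datum so that full faithfulness holds exactly. The naive choice $\mathcal C=\mathcal U$ gives retraction pairs instead of morphisms, and adjoining the initial object is the fix. Before writing the proof I would verify that adjoining $0$ keeps $\mathcal U_1,\mathcal U_2$ wide subcategories and that composition matches Definition \ref{definition-K}.
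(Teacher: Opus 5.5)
\textbf{Relation to the paper.} Your final construction is the mirror image of the paper's. The paper applies Theorem \ref{theorem-transport} with $F=*$. It adjoins a terminal object $\star$, freezes the target side ($\mathcal U_1=\one$) and lets $\mathcal U_2=\mathcal U\cup\{\id_\star\}$ act on the source side, which yields $\mathcal U^{\mathrm{op}}$ on the observations. You instead adjoin an initial object $0$, freeze the source side and act on the target side, which yields $\mathcal U$ itself. That is allowed by ``up to opposite'', and your full-faithfulness computation is correct.

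\textbf{The gap.} It is in the connected-components step with your main choice $\mathcal U_1=\mathcal C$. The morphism $\id_0:0\to0$ is an object of $\K(\mathfrak D)$ with source $0$, and it is not one of the $!_X$ with $X\in\mathrm{Ob}\,\mathcal U$. So your claim that the objects with source $0$ are exactly the $!_X$ is false. Since $!_X\in\mathcal U_1$ and $!_X\circ\id_0=\,!_X$, the pair $(!_X,\id_0)$ is an arrow $\id_0\to\,!_X$ for every $X$. Hence $\id_0$ lies in the connected component of every $!_X$, and the image of $\mathcal U$ is not a union of connected components. Your argument (``joins it to a morphism with source $0$, hence to some $!_Y$'') silently skips this object.

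\textbf{The repair.} Your parenthetical alternative works, but it must be the actual choice, and it needs its own argument. Let $\mathcal U_1$ consist of the morphisms of $\mathcal U$ together with $\id_0$; this is a wide subcategory of $\mathcal C$. Arrows $!_X\to\,!_Y$ are still exactly the $u\in\Hom_{\mathcal U}(X,Y)$. An arrow from $\id_0$ to $!_X$ would require a morphism $0\to X$ in $\mathcal U_1$, and there is none. An arrow out of $!_X$ lands on some $u\,!_X=\,!_Y$ with $Y\neq0$, because $\Hom(X,0)=\emptyset$. Since sources are frozen, the image is closed under arrows in both directions. This is exactly why the paper uses $\mathcal U_2=\mathcal U\cup\{\id_\star\}$ rather than all of $\mathcal C_F$; it is the content of the last sentence of the proof of Theorem \ref{theorem-transport}.
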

\begin{proof}
Apply Theorem \ref{theorem-transport} below to $\mathcal C=\mathcal U$ and the constant functor $F=*$: the observations form a full subcategory isomorphic to $\mathcal U^{\mathrm{op}}$, and no arrow joins an observation to another object.
\end{proof}

\noindent Consequently no restriction on $\K(\mathfrak D)$ can explain which relations occur, and the constraint must come from $s$, that is, from how regular $\sigma,\tau_1,\tau_2$ are. This is Section \ref{section-regularity}.

\section{First construction: Green's relations}
\label{section-green}

\noindent Green's relations are the oldest relations of the kind studied here. Let $M$ be a monoid. For $m\in M$ let $mM$, $Mm$, $MmM$ be the principal right, left and two-sided ideals generated by $m$. Green's relations \cite{Green1951,Howie1995} are
\[
m\,\GR\,\tilde m\iff mM=\tilde mM,\qquad m\,\GL\,\tilde m\iff Mm=M\tilde m,\qquad m\,\GJ\,\tilde m\iff MmM=M\tilde mM,
\]
and $\GH=\GR\cap\GL$. Concretely, $m\,\GR\,\tilde m$ means $\tilde m=mx$ and $m=\tilde my$ for some $x,y$: each is a right multiple of the other; $\GJ$ allows two-sided multiples. They are the basic tools of semigroup theory: the $\GH$-classes containing an idempotent are the maximal subgroups, and $\GJ$ orders elements by ``size''.

\begin{definition}\label{definition-green-data}
For a category $\mathcal C$ regarded as a $2$-category with identity $2$-cells only, the \textbf{Green datum} is $\mathrm{Gr}(\mathcal C)=(\mathcal C,\mathcal C,\id,\id,\id;\ \mathcal U_1=\mathcal U_2=\mathcal C,\ \mathcal W=\one)$. Let $\mathrm{Gr}_{\GR}(\mathcal C)$ and $\mathrm{Gr}_{\GL}(\mathcal C)$ be obtained by replacing $\mathcal U_1$, respectively $\mathcal U_2$, by the subcategory $\one$ of identities.
\end{definition}

\noindent Nothing is transported ($\sigma=\tau_i=\id$), so these data are strict and completely functorial.

\begin{proposition}\label{proposition-green}
\begin{enumerate}[(a)]
\item In $\K(\mathrm{Gr}(\mathcal C))$ an arrow $m\to\tilde m$ is a pair $(u_1,u_2)$ with $\tilde m=u_1mu_2$. Hence $m\simeq\tilde m$ iff $\tilde m=u_1mu_2$ and $m=v_1\tilde mv_2$ for some morphisms $u_i,v_i$.
\item For $\mathcal C=\B M$, the relation $\simeq$ of $\mathrm{Gr}(\mathcal C)$ is $\GJ$, that of $\mathrm{Gr}_{\GR}$ is $\GR$ and that of $\mathrm{Gr}_{\GL}$ is $\GL$.
\end{enumerate}
\end{proposition}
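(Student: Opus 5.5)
The plan is to unwind Definition \ref{definition-K} directly for the Green datum and then specialize to a single object. For (a), I will substitute $\sigma=\tau_1=\tau_2=\id$ and $\mathcal D=\mathcal C$ with identity $2$-cells only. Then $\Theta_{u_1,u_2}(m)=u_1mu_2$. Since $\mathcal W=\one$, a $2$-morphism $\Theta_{u_1,u_2}(m)\Rightarrow\tilde m$ exists exactly when $u_1mu_2=\tilde m$, and it is then unique, namely the identity. So an arrow $m\to\tilde m$ is the same as a pair $(u_1,u_2)$ with $u_1:B\to\tilde B$, $u_2:\tilde A\to A$ and $\tilde m=u_1mu_2$. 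Because $\mathcal U_1=\mathcal U_2=\mathcal C$, there is no further restriction on these pairs. The description of $\simeq$ then follows immediately from Definition \ref{definition-relation}. Equivalently, this is Theorem \ref{theorem-normal-form} in the strict case: $V$ acts on the hom-sets by two-sided composition and $s(m)=((A,B),m)$.

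For (b), take $\mathcal C=\B M$. Every pair of elements is parallel, and the composite $u_1mu_2$ is the product in $M$.

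\begin{itemize}
\item For $\mathrm{Gr}(\B M)$, part (a) gives $m\simeq\tilde m$ iff $\tilde m\in MmM$ and $m\in M\tilde mM$. This is equivalent to $MmM=M\tilde mM$: the forward direction holds because $\tilde m\in MmM$ gives $M\tilde mM\subseteq MmM$, since $MmM$ is a two-sided ideal. The converse holds because $M$ has a unit, so $m\in MmM$.
\item For $\mathrm{Gr}_{\GR}$, the subcategory $\mathcal U_1$ is $\one$, so $u_1=1$. An arrow is then $\tilde m=mu_2$, and $\simeq$ says $\tilde m\in mM$ and $m\in\tilde mM$. This is equivalent to $mM=\tilde mM$, using that $mM$ is a right ideal containing $m$.
\item For $\mathrm{Gr}_{\GL}$, the argument is the symmetric one with $u_2=1$, which yields $\GL$.
\end{itemize}

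There is no real obstacle here. The only point needing care is the bookkeeping of which side each $\mathcal U_i$ acts on. $\mathcal U_1$ modifies the target, i.e.\ it multiplies on the left in $\B M$. So restricting $\mathcal U_1$ to identities leaves right multiplication, which is $\GR$ rather than $\GL$. I would check this against the composition convention (right to left) before concluding. The other point is the use of the monoid unit, which guarantees $m\in mM$, $m\in Mm$ and $m\in MmM$.
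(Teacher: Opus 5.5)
Your proposal is correct and follows the paper's proof essentially step by step. For (a) you unwind $\Theta_{u_1,u_2}(m)=u_1mu_2$ with $\mathcal W=\one$, and for (b) you use the ideal-containment argument (with the unit giving $m\in MmM$ and $m\in mM$), including the same side bookkeeping: $\mathcal U_1=\one$ leaves right multiples, hence $\GR$.
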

\begin{proof}
(a) Here $\Theta_{u_1,u_2}(m)=u_1mu_2$, and a $2$-morphism $u_1mu_2\Rightarrow\tilde m$ in $\one$ exists iff $u_1mu_2=\tilde m$. (b) For $\mathcal C=\B M$, $m\trianglelefteq\tilde m$ means $\tilde m\in MmM$. So $m\simeq\tilde m$ iff $\tilde m\in MmM$ and $m\in M\tilde mM$; since an ideal containing the generator of another ideal contains that ideal, this says $MmM=M\tilde mM$. For $\mathrm{Gr}_{\GR}$ we have $u_1=1$, so $m\simeq\tilde m$ iff $\tilde m\in mM$ and $m\in\tilde mM$, that is, $mM=\tilde mM$; $\mathrm{Gr}_{\GL}$ is symmetric.
\end{proof}

\noindent By Theorem \ref{theorem-normal-form}, this says that \emph{$\GJ$ is the orbit relation of the two-sided action of $\mathcal C$ on its own morphisms}: here $V=\mathcal C^{\mathrm{op}}\times\mathcal C$ acts on $Y=\Mor(\mathcal C)$ by $(u_2,u_1)\cdot p=u_1pu_2$, and $s$ is a bijection onto $Y$.

\medskip
\noindent\textbf{The relation $\GH$ by a product trick.} Since $\GH=\GR\cap\GL$ requires the \emph{same} element $\tilde m$ to be reached by a left and by a right multiplication, one cannot simply combine $\mathrm{Gr}_{\GR}$ and $\mathrm{Gr}_{\GL}$. The auxiliary $2$-category solves this: it carries two coordinates, and each modification acts on one coordinate only.

\begin{proposition}\label{proposition-green-H}
Let $M$ be a monoid, $\mathrm{Gr}_{\GH}(M)=(\B M,\B(M\times M),\sigma,\tau_1,\tau_2;\ \B M,\B M,\one)$ with the monoid homomorphisms $\sigma(m)=(m,m)$, $\tau_1(u)=(u,1)$, $\tau_2(u)=(1,u)$. Then the relation $\simeq$ of $\mathrm{Gr}_{\GH}(M)$ is Green's relation $\GH$.
\end{proposition}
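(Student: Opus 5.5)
We compute the arrows of $\K(\mathrm{Gr}_{\GH}(M))$ directly from Definition \ref{definition-K}. Since $\mathcal W=\one$, an arrow $m\to\tilde m$ is a pair $(u_1,u_2)\in M\times M$ with $\Theta_{u_1,u_2}(m)=\sigma(\tilde m)$ in the monoid $M\times M$. We first check that the datum is legitimate. $\sigma,\tau_1,\tau_2$ are monoid homomorphisms, hence functors $\B M\to\B(M\times M)$, and they agree on the unique object. Composition in $\B(M\times M)$ is componentwise multiplication, so
\[
\Theta_{u_1,u_2}(m)=\tau_1(u_1)\,\sigma(m)\,\tau_2(u_2)=(u_1,1)(m,m)(1,u_2)=(u_1m,\;mu_2).
\]
Thus $m\trianglelefteq\tilde m$ iff there are $u_1,u_2\in M$ with $u_1m=\tilde m$ and $mu_2=\tilde m$.

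The essential point is that the two coordinates decouple. The condition $(u_1m,mu_2)=(\tilde m,\tilde m)$ consists of two independent equations: the first involves only $u_1$ and the second only $u_2$. Hence $m\trianglelefteq\tilde m$ iff $\tilde m\in Mm$ and $\tilde m\in mM$. The diagonal $\sigma$ makes both coordinates match the \emph{same} $\tilde m$, and this is exactly what the product trick is for.

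Symmetrizing gives that $m\simeq\tilde m$ holds iff $\tilde m\in Mm\cap mM$ and $m\in M\tilde m\cap\tilde mM$. We then regroup these four memberships. The conditions $\tilde m\in mM$ and $m\in\tilde mM$ say $mM=\tilde mM$, that is, $m\,\GR\,\tilde m$, by the same ideal argument as in Proposition \ref{proposition-green}(b). The conditions $\tilde m\in Mm$ and $m\in M\tilde m$ likewise give $m\,\GL\,\tilde m$. So $\simeq$ equals $\GR\cap\GL=\GH$.

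There is no serious obstacle. The only point needing care is that the independence of the two coordinates is what allows the existential quantifiers over $u_1$ and $u_2$ to be separated. Equivalently, this is the fact that $\Theta$ lands in a product whose coordinates are acted on separately by $\tau_1$ and $\tau_2$.
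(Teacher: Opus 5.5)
Your proposal is correct and follows the paper's proof essentially verbatim. You compute $\Theta_{u_1,u_2}(m)=(u_1m,mu_2)$, read off $m\trianglelefteq\tilde m$ iff $\tilde m\in Mm\cap mM$, and regroup the four memberships into $Mm=M\tilde m$ and $mM=\tilde mM$; your extra check that the datum is legitimate is a harmless addition.
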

\begin{proof}
An arrow $m\to\tilde m$ is a pair $(u_1,u_2)$ with $(u_1,1)(m,m)(1,u_2)=(\tilde m,\tilde m)$, that is, $\tilde m=u_1m=mu_2$. So $m\trianglelefteq\tilde m$ iff $\tilde m\in Mm\cap mM$, and $m\simeq\tilde m$ iff $\tilde m\in Mm\cap mM$ and $m\in M\tilde m\cap\tilde mM$. The conditions $\tilde m\in Mm$, $m\in M\tilde m$ say $Mm=M\tilde m$, and the other two say $mM=\tilde mM$.
\end{proof}

\noindent The remaining relation $\GD=\GR\vee\GL$ asks for an intermediate element $n$ with $m\,\GR\,n\,\GL\,\tilde m$; the two comparisons do not share a target and the trick does not apply. Whether $\GD$ is of the form $\simeq$ for some comparison datum is open. Finally, $\GJ$ is not compatible with the product of $M$ in general, so it is not a congruence (Section \ref{section-brief}); the relations that do arise from functorial data and are congruences are examined in Section \ref{section-regularity}.

\section{Second construction: transport relations and orbits}
\label{section-transport}

\noindent The relations obtained from a group or monoid acting on a set (``two points are the same if one is moved to the other'') are not Green data on the object to be classified, because the points of the set are not morphisms. The trick is to make them morphisms by adjoining one object.

\begin{definition}\label{definition-CF}
Let $F:\mathcal C^{\mathrm{op}}\to\Set$ be a functor: it assigns to each object $A$ a set $F(A)$ of \emph{observations on $A$}, and to each $w:\tilde A\to A$ a map $w^*:F(A)\to F(\tilde A)$ with $\id^*=\id$ and $(ww')^*=w'^*w^*$. Let $\mathcal U\subseteq\mathcal C$ be a wide subcategory. The category $\mathcal C_F$ has the objects of $\mathcal C$ and one new object $\star$, and
\[
\Hom(A,B)=\Hom_{\mathcal C}(A,B),\quad \Hom(A,\star)=F(A),\quad \Hom(\star,\star)=\{\id\},\quad \Hom(\star,A)=\emptyset,
\]
with composition $x\circ w:=w^*x$ for an observation $x:A\to\star$ and $w:\tilde A\to A$. Associativity $(x\circ w)\circ w'=x\circ(ww')$ is the identity $w'^*w^*=(ww')^*$. The \textbf{transport datum} is
\[
\mathfrak D_F^{\mathcal U}=\bigl(\mathcal C_F,\mathcal C_F,\id,\id,\id;\ \mathcal U_1=\one,\ \mathcal U_2=\mathcal U\cup\{\id_\star\},\ \mathcal W=\one\bigr).
\]
\end{definition}

\noindent It is a Green-type datum on the augmented category $\mathcal C_F$, hence strict and completely functorial. Its objects are the morphisms of $\mathcal C_F$, among which the \emph{observations} $x:A\to\star$ are the ones of interest. Since $\mathcal U_1=\one$, nothing can be done on the target side, which forces the target $\star$ to stay put.

\begin{theorem}\label{theorem-transport}
For observations $x\in F(A)$ and $\tilde x\in F(\tilde A)$, an arrow $x\to\tilde x$ in $\K(\mathfrak D_F^{\mathcal U})$ is a morphism $w:\tilde A\to A$ of $\mathcal U$ with $w^*x=\tilde x$, and composition is composition of such morphisms. So the full subcategory on the observations is the category of elements $\int_{\mathcal U}F$, and
\[
x\simeq\tilde x\iff\text{there are }w,w'\in\mathcal U\text{ with }w^*x=\tilde x\text{ and }w'^*\tilde x=x .
\]
Moreover no arrow of $\K(\mathfrak D_F^{\mathcal U})$ joins an observation to an object that is not an observation, so the observations form a union of connected components.
\end{theorem}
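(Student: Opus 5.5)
The plan is to unfold Definition \ref{definition-K} for the transport datum, using Proposition \ref{proposition-green}(a). The datum $\mathfrak D_F^{\mathcal U}$ is a Green-type datum on $\mathcal C_F$, with $\sigma=\tau_i=\id$ and $\mathcal W=\one$. So an arrow $m\to\tilde m$ in $\K(\mathfrak D_F^{\mathcal U})$ is a pair $(u_1,u_2)$ with $u_1\in\mathcal U_1=\one$, $u_2\in\mathcal U_2=\mathcal U\cup\{\id_\star\}$ and $\tilde m=u_1mu_2$. Composition is $(u_1',u_2')\circ(u_1,u_2)=(u_1'u_1,u_2u_2')$. Before using this, I will check that $\mathcal U_2$ really is a wide subcategory of $\mathcal C_F$. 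It contains all identities, including $\id_\star$. It is closed under composition, since no arrow of $\mathcal C$ composes with $\id_\star$ except trivially. This is needed for the datum to be legitimate.

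First I take observations $x:A\to\star$ and $\tilde x:\tilde A\to\star$. An arrow $x\to\tilde x$ has $u_1:\star\to\star$ in $\one$, so $u_1=\id_\star$, and $u_2:\tilde A\to A$ in $\mathcal U_2$. Since $\tilde A\neq\star$, the morphism $u_2$ is not $\id_\star$, so $u_2=w\in\mathcal U$. The equation $\tilde x=x\circ w$ is, by the composition rule of $\mathcal C_F$, the condition $w^*x=\tilde x$. Composing $(\id,w)$ with $(\id,w')$ gives $(\id,ww')$, which is composition in $\mathcal U$ in the order matching $(ww')^*=w'^*w^*$. Hence the full subcategory on the observations has the objects and arrows of the category of elements. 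I would then state which variance of $\int_{\mathcal U}F$ is meant: an arrow $x\to\tilde x$ is a $w:\tilde A\to A$ with $w^*x=\tilde x$, the elements of the restriction of $F$ to $\mathcal U^{\mathrm{op}}$. The stated characterization of $\simeq$ is then just Definition \ref{definition-relation}.

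Next comes the connected-components claim. An arrow out of an observation $x:A\to\star$ has target $u_1\,x\,u_2$ with $u_1=\id_\star$, so the target ends in $\star$. If $u_2$ is a morphism of $\mathcal C$, the target is an observation. The other case would be $u_2=\id_\star$, but that is not composable with $x$ unless $A=\star$, which is impossible. Conversely, suppose an arrow $m\to x$ exists, with $m:A'\to B'$. Then $u_1:B'\to\star$ lies in $\one$, which forces $B'=\star$. So $m$ is either an observation or $\id_\star$. I must rule out $m=\id_\star$. In that case $u_2$ would have to be a morphism $A\to\star$ in $\mathcal U_2$. The only candidate is $\id_\star$, and it requires $A=\star$, again impossible. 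So no arrow joins an observation to a non-observation in either direction, and the observations are a union of connected components.

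I expect the only delicate point to be this case analysis around $\star$, especially excluding $\id_\star$ in both directions. Everything else is direct unfolding of the definitions.
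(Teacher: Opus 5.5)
Your proposal is correct and follows essentially the same route as the paper. You unfold the arrows via Proposition~\ref{proposition-green}(a), force $u_1=\id_\star$ from $\mathcal U_1=\one$, observe that $u_2$ between objects of $\mathcal C$ must lie in $\mathcal U$, and exclude arrows between $\id_\star$ and an observation because $\mathcal U_2$ contains no morphism between $\star$ and an object $A\neq\star$. Your extra checks are points the paper leaves implicit: that $\mathcal U_2$ is a wide subcategory, and that $\int_{\mathcal U}F$ is taken with the variance in which an arrow $x\to\tilde x$ is a $w:\tilde A\to A$ with $w^*x=\tilde x$, which matches the paper's later identification $\int_{\mathcal U}*=\mathcal U^{\mathrm{op}}$.
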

\begin{proof}
By Proposition \ref{proposition-green}(a) an arrow $x\to\tilde x$ is a pair $(u_1,u_2)$ with $\tilde x=u_1\circ x\circ u_2$, where $u_1\in\one$ and $u_2\in\mathcal U\cup\{\id_\star\}$. Both $x$ and $\tilde x$ have target $\star$, so $u_1=\id_\star$. A morphism $u_2:\tilde A\to A$ with $A,\tilde A\neq\star$ must lie in $\mathcal U$, and then $\tilde x=x\circ u_2=u_2^*x$. Composition of arrows is $(w,w')\mapsto ww'$, as $(u_1',u_2')\circ(u_1,u_2)=(u_1'u_1,u_2u_2')$. Finally, an arrow starting or ending at an observation has $u_1=\id$ forced to be an identity, so the source and the target both have target $\star$; and an arrow between $\id_\star$ and an observation would need a morphism of $\mathcal U_2$ between $\star$ and an object $A\ne\star$, and there is none.
\end{proof}

\begin{corollary}[Orbit relations]\label{corollary-orbits}
Let a monoid $M$ act on the right on a set $Z$, and take $\mathcal C=\mathcal U=\B M$, $F(*)=Z$, $F(m)=(z\mapsto zm)$. Then $z\simeq z'$ iff $zM=z'M$. If $G$ is a group acting on the left, with $F(g)z=g^{-1}z$, then $\simeq$ is the orbit relation $z'\in G\cdot z$, and $\K$ is the action groupoid.
\end{corollary}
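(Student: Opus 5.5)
The corollary is a direct specialization of Theorem \ref{theorem-transport}. The plan is to check that the stated $F$ is a functor $\mathcal C^{\mathrm{op}}\to\Set$ in both cases, and then read off the description of $\simeq$ and of the category of elements.

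\emph{The right action.} Take $\mathcal C=\mathcal U=\B M$ and $F(*)=Z$. For $w\in M$ the map $w^*=F(w)$ is $z\mapsto zw$. Contravariance requires $(ww')^*=w'^*w^*$, that is, $z(ww')=(zw)w'$. This is exactly the axiom of a right action, and $1^*=\id$ holds because $z1=z$. Theorem \ref{theorem-transport} then says that $z\simeq z'$ iff there are $w,w'\in M$ with $z'=zw$ and $z=z'w'$. In other words $z'\in zM$ and $z\in z'M$. Since $zM$ is the smallest $M$-invariant subset containing $z$, each inclusion of a generator gives an inclusion of orbits: $z'\in zM$ implies $z'M\subseteq zM$, and symmetrically. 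Hence $z\simeq z'$ iff $zM=z'M$.

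\emph{The group case.} A left action of $G$ is turned into a right action by $z\cdot g:=g^{-1}z$. This is the only point to check carefully. We need $(gh)^*=h^*g^*$, and indeed $(gh)^{-1}z=h^{-1}(g^{-1}z)$. So $F(g)z=g^{-1}z$ is a functor on $\B G^{\mathrm{op}}$, and the first part applies with $M=G$. In a group, $zG$ (for this right action) is just the orbit $G\cdot z$. Orbits partition $Z$, so $zG=z'G$ iff $z'\in G\cdot z$.

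\emph{The groupoid claim.} By Theorem \ref{theorem-transport}, the full subcategory of $\K(\mathfrak D_F^{\mathcal U})$ on the observations is $\int_{G}F$. An arrow $z\to z'$ is a $g\in G$ with $g^{-1}z=z'$, and composition is multiplication in $G$. Every such arrow has the inverse $g^{-1}$, so this category is a groupoid. It is isomorphic to the action groupoid $G\ltimes Z$, up to the harmless reversal $g\leftrightarrow g^{-1}$. Here ``$\K$'' is read as this component, which the theorem says is a union of connected components.

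Nothing here is a real obstacle. The only subtle point is getting the variance right when converting the left action to a right one.
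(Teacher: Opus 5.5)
Your proposal is correct and follows the paper's proof. Both arguments specialize Theorem \ref{theorem-transport}, use $z'\in zM\Rightarrow z'M\subseteq zM$ in the monoid case, check $(gh)^{-1}z=h^{-1}(g^{-1}z)$ in the group case, and invert arrows by $g\mapsto g^{-1}$. Your explicit reading of ``$\K$'' as the union of components on the observations, isomorphic to $G\ltimes Z$ via $g\leftrightarrow g^{-1}$, is slightly more careful than the paper, which leaves this identification implicit.
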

\begin{proof}
By Theorem \ref{theorem-transport}, $z\simeq z'$ iff $z'=zm$ and $z=z'm'$ for some $m,m'\in M$. Then $z'M=zmM\subseteq zM$ and symmetrically $zM\subseteq z'M$, so $zM=z'M$; the converse is clear. For a group, the convention $F(g)z=g^{-1}z$ turns a left action into the contravariant functor required (since $(gh)^{-1}z=h^{-1}(g^{-1}z)$), every arrow $g$ has the inverse $g^{-1}$, and $z\simeq z'$ iff $z'=g^{-1}z$ for some $g$.
\end{proof}

\noindent \textbf{Why this matters.} First, Corollary \ref{corollary-orbits} contains the orbit relations of all actions, and the same device handles any relation obtained by ``the same value of an invariant'' (take an action with the invariant as the observation). Second, taking $F=*$ the one-point functor gives $\int_{\mathcal U}F=\mathcal U^{\mathrm{op}}$; this proves Corollary \ref{corollary-shape}. Third, for \emph{any} equivalence relation $E$ on a set $Y$ the group of permutations of $Y$ preserving each class has orbits exactly the classes, so $E$ is the relation $\simeq$ of a transport datum, restricted to the observations. That such a restriction can be arbitrary is the reason a statement of the form ``every relation occurs'' is empty, and it leads to the next section.

\section{The regularity axis, and completely functorial data}
\label{section-regularity}

\noindent \textbf{Why a constraint is needed.} If $\sigma$ may be an arbitrary function it can simply record the answer. Call $\mathcal C$ \emph{strongly connected} if for any two objects $A,B$ there is a morphism $A\to B$ (a monoid is an example, and so are the category of vector spaces with linear maps and the category of Hilbert spaces with bounded operators).

\begin{proposition}\label{proposition-universality}
Let $\mathcal C$ be strongly connected and $E$ any equivalence relation on $\Mor(\mathcal C)$. There is a comparison datum with $(\mathcal U_1,\mathcal U_2,\mathcal W)=(\mathcal C,\mathcal C,\one)$ for which $\simeq$ is $E$.
\end{proposition}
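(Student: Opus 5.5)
The plan is to let $\sigma$ record the $E$-class of $m$ and to choose $\mathcal D$ so that any modification of a class representative still lands on that class, while different classes never meet. Take $\mathcal D$ to be a category (identity $2$-cells only, so $\mathcal W=\one$ and the datum is strict) with the same objects as $\mathcal C$. For objects $X,Y$ put $\Hom_{\mathcal D}(X,Y):=\Mor(\mathcal C)/E$, the set of $E$-classes. Define composition of two classes $c:Y\to Z$ and $c':X\to Y$ to be $c$ whenever $c\neq[\id]$-type neutral elements are involved. To make this a genuine category we need identities, so instead we set
\[
\Hom_{\mathcal D}(X,Y):=(\Mor(\mathcal C)/E)\sqcup\{e_{X,Y}\},
\]
where $e_{X,X}$ plays the role of the identity. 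The composite of $e$'s is $e$, $e\circ c=c=c\circ e$, and $c\circ c'=c$ (a left-zero rule). The last rule is associative, since both bracketings of $c\,c'\,c''$ return the leftmost class. Hence $\mathcal D$ is a category; the extra symbols $e_{X,Y}$ for $X\ne Y$ are harmless.

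Let $\tau_1=\tau_2:\mathcal C\to\mathcal D$ be the identity on objects and send every morphism $A\to B$ to $e_{A,B}$. This is a functor, since $e\,e=e$ and identities go to $e_{A,A}$. Put $\sigma(m):=[m]_E$, its class, viewed in $\Hom_{\mathcal D}(A,B)$. Then
\[
\Theta_{u_1,u_2}(m)=e\,[m]\,e=[m]
\]
for all $u_1,u_2$. So an arrow $m\to\tilde m$ exists iff there are morphisms $u_1:B\to\tilde B$ and $u_2:\tilde A\to A$ with $[m]=[\tilde m]$. Strong connectedness guarantees that such $u_1,u_2$ always exist. Therefore $m\trianglelefteq\tilde m$ iff $m\,E\,\tilde m$, and $\simeq$ equals $E$, with $(\mathcal U_1,\mathcal U_2,\mathcal W)=(\mathcal C,\mathcal C,\one)$ as required.

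The main obstacle is making $\mathcal D$ an honest category in which the transported modifications act trivially on the class while identities exist; adjoining the neutral symbols $e_{X,Y}$ with the left-zero multiplication on classes handles this. The remaining checks are associativity in the mixed cases involving $e$, which are routine because $e$ is neutral on both sides and $e\,e=e$.
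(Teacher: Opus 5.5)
Your proof is correct and is essentially the paper's: there too every hom-set of $\mathcal D$ is the set of $E$-classes with an adjoined unit, $\tau_1=\tau_2$ are constant at that unit, $\sigma(m)=[m]$, and strong connectedness supplies $u_1,u_2$. The only difference is the product of two classes. The paper sends it to an adjoined zero, whereas you use the left-zero rule $cc'=c$; such products never occur in $\Theta_{u_1,u_2}(m)=e\,[m]\,e$, so either choice works. You should also delete the garbled false-start sentence in your first paragraph.
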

\begin{proof}
Let $M=\{1,0\}\sqcup\Mor(\mathcal C)/E$ with $1$ a unit and $xy=0$ whenever $x,y\ne1$; it is a monoid (if no factor of a triple product is $1$ both bracketings are $0$, and factors equal to $1$ can be removed). Let $\mathcal D$ have the objects of $\mathcal C$ and $\Hom_{\mathcal D}(A,B)=M$ for all $A,B$, composed by the product, with identity $2$-cells only. Let $\tau_1=\tau_2$ send every morphism to $1$, and $\sigma(m)=[m]$. Then $\Theta_{u_1,u_2}(m)=[m]$, so an arrow $m\to\tilde m$ is a pair of morphisms $B\to\tilde B$, $\tilde A\to A$ with $[m]=[\tilde m]$. These exist because $\mathcal C$ is strongly connected, so $m\trianglelefteq\tilde m$ iff $m\mathrel{E}\tilde m$.
\end{proof}

\noindent This is a normalization of the framework, not a classification. The meaningful question is: \emph{how regular must $\sigma,\tau_1,\tau_2$ be, and which relations survive at each degree of regularity?} We grade data along the following axis, from most to least regular.
\begin{itemize}
\item \emph{Rigid}: $\mathcal D=\mathcal C$, $\sigma=\tau_1=\tau_2=\id$, identity $2$-cells (nothing is transported; the Green data of Section \ref{section-green}, with $\mathcal U_1,\mathcal U_2$ arbitrary wide subcategories).
\item \emph{Diagonal}: $\sigma=\tau_1=\tau_2$ is a single functor, identity $2$-cells.
\item \emph{Completely functorial}: $\sigma,\tau_1,\tau_2$ are three (possibly different) functors.
\item \emph{Pseudofunctorial and lax}: $\sigma$ preserves composition only up to an admissible $2$-isomorphism, respectively $2$-morphism.
\item \emph{Arbitrary}: $\sigma$ is any function (Proposition \ref{proposition-universality}).
\end{itemize}
Each class is contained in the next. The two most regular classes can be classified completely, by the same argument as Proposition \ref{proposition-green} and Theorem \ref{theorem-normal-form}: a rigid datum gives the relation ``$\tilde m=u_1mu_2$ and $m=v_1\tilde mv_2$ with $u_i,v_i$ in the given subcategories'', and a diagonal datum gives the same relation read in the quotient $\mathcal C/{\approx}$ by the congruence $\approx=\ker\sigma$ and pulled back. In particular, on a finite group both give exactly the double coset relations $y\in H_1xH_2$, and on an abelian group the partitions into cosets of a subgroup. We now concentrate on the first class where genuinely new relations appear, the completely functorial one. \emph{From now on all data are strict and completely functorial}, and we write $\mathcal F(\mathcal C)$ for the set of relations $\simeq$ they produce on $\Mor(\mathcal C)$.

\subsection{What completely functorial data contain}

\noindent By Theorem \ref{theorem-normal-form}, for a completely functorial datum the map $s$ is compatible with composition in $\mathcal C$: $s(amb)=\bigl((A',B'),\sigma(a)\sigma(m)\sigma(b)\bigr)$. This is what arbitrary $\sigma$ lacks. The known examples are the following.

\begin{proposition}\label{proposition-functorial-list}
$\mathcal F(\mathcal C)$ contains: (a) $\GR,\GL,\GJ$ and, for monoids, $\GH$; (b) the transport relations of Theorem \ref{theorem-transport} (hence all orbit relations); (c) every congruence of $\mathcal C$, that is, every equivalence relation on $\Mor(\mathcal C)$ on parallel morphisms compatible with composition on both sides.
\end{proposition}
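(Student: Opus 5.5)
The plan is to exhibit, for each listed relation, an explicit strict completely functorial datum, reusing the constructions already made wherever possible. Parts (a) and (b) are essentially bookkeeping. For (a), the Green data $\mathrm{Gr}(\mathcal C)$, $\mathrm{Gr}_{\GR}(\mathcal C)$ and $\mathrm{Gr}_{\GL}(\mathcal C)$ have $\sigma=\tau_1=\tau_2=\id$, so they are strict and completely functorial, and Proposition \ref{proposition-green} identifies their relations with $\GJ,\GR,\GL$. (For a general category one reads $\GR,\GL,\GJ$ as defined by Proposition \ref{proposition-green}(a) and its one-sided variants.) For $\GH$ on a monoid, the datum $\mathrm{Gr}_{\GH}(M)$ of Proposition \ref{proposition-green-H} uses the monoid homomorphisms $m\mapsto(m,m)$, $u\mapsto(u,1)$ and $u\mapsto(1,u)$, hence functors $\B M\to\B(M\times M)$. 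For (b), the transport datum $\mathfrak D_F^{\mathcal U}$ is again of Green type on $\mathcal C_F$, so it is strict and completely functorial. Theorem \ref{theorem-transport} and Corollary \ref{corollary-orbits} give the transport and orbit relations.

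There is one subtlety in (b): these relations live on the observations, which are morphisms of $\mathcal C_F$, not of $\mathcal C$. I would state (b) as membership in $\mathcal F(\mathcal C_F)$, restricted to the union of connected components formed by the observations, exactly as Theorem \ref{theorem-transport} phrases it.

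Part (c) is the real content. Given a congruence $\approx$ on $\mathcal C$, I take $\mathcal D=\mathcal C/{\approx}$, the quotient category, with identity $2$-cells, and $\sigma=\tau_1=\tau_2=q$, the quotient functor. This is a diagonal datum, hence completely functorial. Its relation is then $\tilde m\approx u_1mu_2$ and $m\approx v_1\tilde m v_2$, which is coarser than $\approx$, so I must choose $\mathcal U_1,\mathcal U_2$ to kill the modifications. I take $\mathcal U_1=\mathcal U_2=\one$, the identities, which are wide subcategories. Then an arrow $m\to\tilde m$ exists iff $m,\tilde m$ are parallel (the identities force $\tilde A=A$ and $\tilde B=B$) and $q(m)=q(\tilde m)$, that is, $m\approx\tilde m$. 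The relation $\trianglelefteq$ is then already symmetric, and $\simeq$ equals $\approx$. Here the parallelism built into the definition of congruence is exactly what matches the object-preservation forced by identity modifications.

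The main obstacle I expect is not in (c) but in verifying the hypotheses carefully. Three points need checking: that $\mathcal C/{\approx}$ is a genuine category (composition is well defined by two-sided compatibility), that $q$ is the identity on objects so that $\sigma A=\tau_iA$ holds, and, in (a), that the general-category reading of $\GR,\GL,\GJ$ is what the statement intends. Once these are settled, each item reduces to a one-line application of Proposition \ref{proposition-green}, Proposition \ref{proposition-green-H} or Theorem \ref{theorem-transport}.
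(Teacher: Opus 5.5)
Your proposal is correct and follows the paper's proof. Parts (a) and (b) are read off from Propositions \ref{proposition-green}, \ref{proposition-green-H} and Theorem \ref{theorem-transport}, and (c) uses the same datum as the paper, $\sigma=\tau_1=\tau_2=Q$ with $\mathcal U_1=\mathcal U_2=\mathcal W=\one$; your remark that the transport relations in (b) strictly live on the observations of $\mathcal C_F$ rather than on $\Mor(\mathcal C)$ is a fair clarification that the paper's one-line proof leaves implicit.
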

\begin{proof}
(a), (b) are Propositions \ref{proposition-green}, \ref{proposition-green-H} and Theorem \ref{theorem-transport}. (c) Let $E$ be a congruence and $Q:\mathcal C\to\mathcal C/E$ the quotient functor. Take $\sigma=\tau_i=Q$, $\mathcal U_1=\mathcal U_2=\one$, $\mathcal W=\one$. An arrow $m\to\tilde m$ forces $u_i=\id$ and $Qm=Q\tilde m$, that is, $m\mathrel{E}\tilde m$.
\end{proof}

\noindent These relations come from rigid or diagonal data or from transport; the first source of a genuinely new relation is a datum in which $\mathcal D$ remembers a \emph{constraint} between the two modifications.

\begin{theorem}\label{theorem-constrained}
Let $M,\Gamma$ be monoids and $\beta_1,\beta_2:M\to\Gamma$ homomorphisms. Let $\mathfrak D=(\B M,\B(M\times\Gamma),\sigma,\tau_1,\tau_2;\ \B M,\B M,\one)$ with $\sigma(m)=(m,1)$ and $\tau_i(u)=(u,\beta_i(u))$. Then $\mathfrak D$ is completely functorial and strict, and an arrow $m\to\tilde m$ is a pair $(u_1,u_2)$ with
\[
\tilde m=u_1mu_2\qquad\text{and}\qquad\beta_1(u_1)\beta_2(u_2)=1 .
\]
For $M=G$ a group, $\Gamma=G$, $\beta_1=\id$ and $\beta_2=\varphi$ an endomorphism, $\simeq$ is \emph{$\varphi$-twisted conjugacy} $\tilde m=\varphi(g)^{-1}mg$; for $\varphi=\id$ it is \emph{conjugacy}.
\end{theorem}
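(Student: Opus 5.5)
The plan is a direct computation, using Proposition \ref{proposition-preorder} for the last step; no earlier structural result is needed beyond the definitions. I will check functoriality first, then unwind an arrow of $\K(\mathfrak D)$, and finally specialize to groups.

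\emph{Complete functoriality and strictness.} The datum is strict because $\mathcal W=\one$. Since $\B M$ and $\B(M\times\Gamma)$ each have a single object, a functor between them is just a monoid homomorphism. The map $m\mapsto(m,1)$ is a homomorphism, namely the inclusion of the first factor. Each $u\mapsto(u,\beta_i(u))$ is a homomorphism, because it is the pairing of $\id_M$ with the homomorphism $\beta_i$ into the product monoid. The subcategories $\mathcal U_i=\B M$ are wide, and $\one$ contains the identities and is closed under vertical composition and whiskering. So $\mathfrak D$ is a comparison datum in the sense of Definition \ref{definition-master}, and it is completely functorial.

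\emph{Description of the arrows.} By Definition \ref{definition-K} with $\mathcal W=\one$, an arrow $m\to\tilde m$ is a pair $(u_1,u_2)$ satisfying $\Theta_{u_1,u_2}(m)=\sigma(\tilde m)$. Computing in $M\times\Gamma$,
\[
\Theta_{u_1,u_2}(m)=(u_1,\beta_1(u_1))(m,1)(u_2,\beta_2(u_2))=(u_1mu_2,\ \beta_1(u_1)\beta_2(u_2)).
\]
Comparing coordinates with $\sigma(\tilde m)=(\tilde m,1)$ gives exactly the two stated equations $\tilde m=u_1mu_2$ and $\beta_1(u_1)\beta_2(u_2)=1$.

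\emph{The group case.} Take $M=\Gamma=G$, $\beta_1=\id$ and $\beta_2=\varphi$. The constraint becomes $u_1\varphi(u_2)=1$, that is, $u_1=\varphi(u_2)^{-1}$. Setting $g=u_2$, the arrows $m\to\tilde m$ therefore correspond bijectively to the elements $g\in G$ with $\tilde m=\varphi(g)^{-1}mg$. It remains to pass from $\trianglelefteq$ to $\simeq$. I will show that $\K(\mathfrak D)$ is a groupoid and then invoke Proposition \ref{proposition-preorder}. The candidate inverse of $(u_1,u_2)$ is $(u_1^{-1},u_2^{-1})$. Its first equation $m=u_1^{-1}\tilde m\,u_2^{-1}$ follows at once from $\tilde m=u_1mu_2$. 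Its constraint reads $u_1^{-1}\varphi(u_2)^{-1}=(\varphi(u_2)u_1)^{-1}=1$. This holds because in a group the equation $u_1\varphi(u_2)=1$ also gives $\varphi(u_2)u_1=1$. Composition in $\K(\mathfrak D)$ is $(u_1'u_1,u_2u_2')$, so the two composites of $(u_1,u_2)$ and $(u_1^{-1},u_2^{-1})$ are identities. Hence $\simeq$ equals $\trianglelefteq$, which is $\varphi$-twisted conjugacy, and the case $\varphi=\id$ gives ordinary conjugacy.

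The only step that needs any care is this symmetry, since in a general monoid the relation $\trianglelefteq$ need not be symmetric. The group hypothesis is exactly what makes the one-sided inverse $u_1=\varphi(u_2)^{-1}$ two-sided. Alternatively, one can check symmetry directly: $m=\varphi(g^{-1})^{-1}\tilde m\,g^{-1}$.
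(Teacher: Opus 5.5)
Your proposal is correct and follows the paper's proof: you compute $\Theta_{u_1,u_2}(m)$ coordinatewise in $M\times\Gamma$ in the same way, and your inverse $(u_1^{-1},u_2^{-1})$ is exactly the paper's pair $(\varphi(g),g^{-1})$, which shows that $\K(\mathfrak D)$ is a groupoid. The only addition is your explicit check of complete functoriality, which the paper leaves implicit.
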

\begin{proof}
We have $\tau_1(u_1)\sigma(m)\tau_2(u_2)=(u_1mu_2,\ \beta_1(u_1)\beta_2(u_2))$, which equals $\sigma(\tilde m)=(\tilde m,1)$ iff the two stated conditions hold. For a group, $u_1\varphi(u_2)=1$ means $u_1=\varphi(u_2)^{-1}$, so $\tilde m=\varphi(g)^{-1}mg$ with $g=u_2$. Conversely the pair $(\varphi(g),g^{-1})$ satisfies $\varphi(g)\varphi(g^{-1})=1$ and sends $\tilde m$ back to $m$, so $\K(\mathfrak D)$ is a groupoid and $\trianglelefteq$ is the orbit relation of the twisted action.
\end{proof}

\noindent In a nonabelian group conjugacy is not a congruence (in $S_3$, two distinct transpositions $t,t'$ are conjugate but $tt=e$ is not conjugate to the $3$-cycle $t't$), and it is not a rigid or diagonal relation either (Section \ref{section-brief}); and it is not a pullback of $\GJ$, since $\GJ$ is universal on a group. So $\mathcal F(\mathcal C)$ is strictly larger than the families of Proposition \ref{proposition-functorial-list}.

\subsection{Finite groups}

\begin{proposition}\label{proposition-finite}
Let $G$ be a finite group and $\mathfrak D$ a completely functorial strict datum on $\B G$, with $\sigma,\tau_1,\tau_2:G\to N$ homomorphisms into the endomorphism monoid $N$ of the object $\sigma(*)$ of $\mathcal D$. Then $\K(\mathfrak D)$ is a groupoid, and
\[
m\simeq\tilde m\iff\sigma(\tilde m)\in A\,\sigma(m)\,B,\qquad A=\tau_1(\mathcal U_1),\ B=\tau_2(\mathcal U_2),
\]
where $A,B$ are subgroups of the group $N^\times$ of invertible elements of $N$. That is, $\simeq$ is the pullback along $\sigma$ of a double coset relation.
\end{proposition}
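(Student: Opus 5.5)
Since the datum is strict, the $2$-morphism in an arrow is an identity, so by Definition \ref{definition-K} (equivalently Theorem \ref{theorem-normal-form}) an arrow $m\to\tilde m$ is a pair $(u_1,u_2)\in\mathcal U_1\times\mathcal U_2$ with $\tau_1(u_1)\sigma(m)\tau_2(u_2)=\sigma(\tilde m)$ in $N$. The plan is: first show that $\mathcal U_1,\mathcal U_2$ are subgroups of $G$, so that $A$ and $B$ are subgroups of $N^\times$; then show that every arrow has an inverse arrow, so that $\K(\mathfrak D)$ is a groupoid; and finally read off $\simeq$ from Proposition \ref{proposition-preorder}.

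For the first step, $\mathcal U_i$ is a wide subcategory of $\B G$, that is, a submonoid of $G$. Since $G$ is finite, each $u\in\mathcal U_i$ has finite order $n$, so $u^{-1}=u^{n-1}\in\mathcal U_i$. Hence $\mathcal U_i$ is a subgroup. Since $\tau_i$ is a monoid homomorphism, $\tau_i(u)\tau_i(u^{-1})=\tau_i(1)=1$ in both orders, so $\tau_i(u)\in N^\times$. Thus $A=\tau_1(\mathcal U_1)$ and $B=\tau_2(\mathcal U_2)$ are subgroups of $N^\times$.

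For the groupoid claim, take an arrow $(u_1,u_2):m\to\tilde m$. Then $(u_1^{-1},u_2^{-1})$ lies in $\mathcal U_1\times\mathcal U_2$, and
\[
\tau_1(u_1^{-1})\sigma(\tilde m)\tau_2(u_2^{-1})=\tau_1(u_1)^{-1}\tau_1(u_1)\sigma(m)\tau_2(u_2)\tau_2(u_2)^{-1}=\sigma(m),
\]
so it is an arrow $\tilde m\to m$. By the composition law $(u_1'u_1,u_2u_2')$, its composites with $(u_1,u_2)$ in either order are $(1,1)$, which is the identity arrow. So $\K(\mathfrak D)$ is a groupoid. By Proposition \ref{proposition-preorder}, $m\simeq\tilde m$ iff $m\trianglelefteq\tilde m$, that is, iff $\sigma(\tilde m)=a\,\sigma(m)\,b$ for some $a\in A$, $b\in B$, which is the stated double coset condition. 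Symmetry of this condition is also clear directly from $A,B$ being groups.

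The only step needing care is the first: a submonoid of a possibly infinite monoid need not be a group, so finiteness of $G$ is exactly where the hypothesis is used. The rest is the computation above. The mild subtlety is that $\sigma(m)$ itself need not be invertible in $N$, so the double coset $A\sigma(m)B$ is taken in $N$ under the action of $A\times B\subseteq N^\times\times N^\times$, and not inside $N^\times$. This is harmless, because the relation is the orbit relation of the group $A\times B$ acting on $N$ by $(a,b)\cdot p=apb$, pulled back along $\sigma$.
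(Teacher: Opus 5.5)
Your proof is correct and follows the paper's argument step for step: finiteness makes $\mathcal U_1,\mathcal U_2$ subgroups, their images $A,B$ lie in $N^\times$, the pair $(u_1^{-1},u_2^{-1})$ is an inverse arrow, and then $\simeq$ coincides with $\trianglelefteq$; you also check the composites more explicitly than the paper does. One correction to your closing remark, which does not affect the proof: $\sigma$ is a homomorphism on the group $G$, so $\sigma(m)$ is invertible with inverse $\sigma(m^{-1})$, hence $\sigma(m)\in N^\times$ and the double coset $A\,\sigma(m)\,B$ does lie inside $N^\times$.
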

\begin{proof}
A submonoid of a finite group is a subgroup, so $\mathcal U_1,\mathcal U_2$ are subgroups of $G$, and $A,B$ are subgroups of $N^\times$ because $G$ maps into the units. An arrow $m\to\tilde m$ is a pair $(u_1,u_2)$ with $\tau_1(u_1)\sigma(m)\tau_2(u_2)=\sigma(\tilde m)$, so $m\trianglelefteq\tilde m$ iff $\sigma(\tilde m)\in A\sigma(m)B$. This is symmetric: $\sigma(\tilde m)=a\sigma(m)b$ gives $\sigma(m)=a^{-1}\sigma(\tilde m)b^{-1}$, with the arrow $(u_1^{-1},u_2^{-1})$. So $\K(\mathfrak D)$ is a groupoid.
\end{proof}

\noindent On $\mathbb Z/3$ the relation with classes $\{0,1\},\{2\}$ is obtained by this recipe with $\sigma,\tau_1,\tau_2$ three different homomorphisms into $A_4$; it is neither a congruence nor a double coset relation of $\mathbb Z/3$. All five equivalence relations on $\mathbb Z/3$, and all $15$ on $\mathbb Z/4$ (by exhaustive search), are obtained, and we know no relation on a finite group which is not.

\subsection{An obstruction on nil monoids}

\noindent In sharp contrast, completely functorial data cannot realize every relation on a monoid with non-invertible elements. Call a monoid $M$ a \emph{nil monoid} if it is nontrivial, has a zero $0$, and every element other than $1$ is nilpotent. The basic example is $\{1,a,0\}$ with $a^2=0$. In a nil monoid the only unit is $1$ (an invertible nilpotent element would force $0=1$, hence $M$ trivial), and $0\neq1$.

\begin{theorem}[Dichotomy]\label{theorem-nil}
Let $M$ be a nil monoid and $E\in\mathcal F(\B M)$. If the class of $1$ is not $\{1\}$, then $E$ is universal. In particular, on $M=\{1,a,0\}$ the relation $E_0$ with classes $\{1,a\},\{0\}$ is not in $\mathcal F(\B M)$.
\end{theorem}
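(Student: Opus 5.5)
The plan is to show that a nontrivial class of $1$ forces $\sigma$ to be constant, which makes $\simeq$ universal. Throughout, the datum is strict and completely functorial on $\B M$. So $\sigma,\tau_1,\tau_2:M\to N$ are monoid homomorphisms into the endomorphism monoid $N$ of $\sigma(*)$, and $\mathcal U_1,\mathcal U_2$ are submonoids of $M$. As in the proof of Proposition \ref{proposition-finite}, an arrow $m\to\tilde m$ is a pair $(u_1,u_2)$ with $\tau_1(u_1)\sigma(m)\tau_2(u_2)=\sigma(\tilde m)$. Suppose $1\simeq x$ with $x\ne1$. Then there are $u_i,v_i$ with
\[
\sigma(x)=\tau_1(u_1)\tau_2(u_2),\qquad 1=\tau_1(v_1)\sigma(x)\tau_2(v_2)=\tau_1(a)\tau_2(b),
\]
where $a=v_1u_1\in\mathcal U_1$ and $b=u_2v_2\in\mathcal U_2$.

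I will use two elementary lemmas about one-sided inverses.
\begin{enumerate}[(i)]
\item In a nil monoid, $uv=1$ implies $u=v=1$. If $u\ne1$, then $u^kv^k=1$ by cancelling inner pairs $uv$; with $u^k=0$ this gives $0=1$, a contradiction.
\item If $p,q\in N$ satisfy $pq=1$ and $p=\tau_1(c)$ with $c\ne1$, then $\tau_1$ is trivial. Again $p^kq^k=1$, so $\tau_1(0)=\tau_1(c^k)$ is an idempotent with a right inverse $r$. Then $\tau_1(0)=\tau_1(0)\tau_1(0)r=\tau_1(0)r=1$, hence $\tau_1(m)=\tau_1(m)\tau_1(0)=\tau_1(m0)=\tau_1(0)=1$ for all $m$.
\end{enumerate}
Symmetrically, using left inverses, $\tau_2(c)$ left-invertible with $c\ne1$ forces $\tau_2$ to be trivial.

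The main step is to show $\sigma(x)=1$, by cases.
\begin{itemize}
\item If $u_1=u_2=1$, then $\sigma(x)=1$ directly.
\item If $u_1\ne1$, then $a\ne1$ by (i), so $\tau_1$ is trivial by (ii), and $\sigma(x)=\tau_2(u_2)$ with $1=\tau_2(u_2v_2)$.
  \begin{itemize}
  \item If $u_2v_2\ne1$, then $\tau_2$ is trivial by the symmetric version of (ii), and $\sigma(x)=1$.
  \item If $u_2v_2=1$, then (i) gives $u_2=1$, and again $\sigma(x)=1$.
  \end{itemize}
\item The case $u_2\ne1$ is symmetric.
\end{itemize}
The delicate point is this case bookkeeping: one must exploit one-sided invertibility of images of nilpotents without assuming that $N$ has any special structure. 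This is why the idempotent-with-one-sided-inverse trick is the key lemma.

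To conclude, $x$ is nilpotent, so $\sigma(0)=\sigma(x)^k=1$. Hence $\sigma(m)=\sigma(m)\sigma(0)=\sigma(m0)=\sigma(0)=1$ for every $m$, so $\sigma$ is constant. The pair $(1,1)$ is then an arrow between any two elements, and $E$ is universal. For the final claim, the relation $E_0$ on $\{1,a,0\}$ has class of $1$ equal to $\{1,a\}\ne\{1\}$ but is not universal, so $E_0\notin\mathcal F(\B M)$.
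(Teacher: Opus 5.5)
Your proof is correct and follows the paper's argument. Your lemma (ii) is the paper's steps (A)/(B), where the idempotent $\tau_i(0)$ with a one-sided inverse must be $1$, and your lemma (i) is its step (C), which you prove more directly via $u^kv^k=1$ rather than through the idempotent $u_1v_1$; the only other difference is that you split cases on $u_1,u_2$, whereas the paper splits on $p=v_1u_1$ and $q=u_2v_2$ independently to get $\tau_1(u_1)=1$ and $\tau_2(u_2)=1$ separately, which avoids your nested sub-case.
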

\begin{proof}
Let $\mathfrak D$ be completely functorial and strict with $1\simeq a$, $a\neq1$. The maps $\sigma,\tau_i$ are homomorphisms from $M$ to the monoid $N=\mathrm{End}_{\mathcal D}(\sigma(*))$. An arrow $1\to a$ gives $u_1,u_2$ with $\tau_1(u_1)\tau_2(u_2)=\sigma(a)$, and an arrow $a\to1$ gives $v_1,v_2$ with $\tau_1(v_1)\sigma(a)\tau_2(v_2)=1$. Hence
\[
\tau_1(p)\tau_2(q)=1,\qquad p=v_1u_1,\quad q=u_2v_2 .
\]

\emph{(A) If $p\ne1$ then $\tau_1\equiv1$.} Then $p^n=0$ for some $n$. As $\tau_1(p)$ has the right inverse $\tau_2(q)$, so has $e=\tau_1(p)^n=\tau_1(0)$, which is idempotent. If $ey=1$ then $e=e(ey)=ey=1$. So $\tau_1(0)=1$, and $\tau_1(x)=\tau_1(x)\tau_1(0)=\tau_1(x0)=\tau_1(0)=1$ for all $x$.

\emph{(B) If $q\neq1$ then $\tau_2\equiv1$.} Symmetric: $\tau_2(q)$ has the left inverse $\tau_1(p)$, so does the idempotent $f=\tau_2(0)$, say $yf=1$, whence $f=(yf)f=yf=1$ and $\tau_2\equiv1$ as in (A).

\emph{(C) If $p=1$ then $u_1=v_1=1$; if $q=1$ then $u_2=v_2=1$.} Suppose $v_1u_1=1$ and put $e=u_1v_1$, an idempotent. If $e=1$ then $u_1$ is a unit, so $u_1=v_1=1$. Otherwise $e$ is nilpotent, hence $e=e^n=0$, and $u_1=u_1v_1u_1=eu_1=0$, so $1=v_1u_1=0$, impossible. The second assertion is symmetric.

By (A) and (C), $\tau_1(u_1)=1$ in both cases, and by (B) and (C), $\tau_2(u_2)=1$. So $\sigma(a)=\tau_1(u_1)\tau_2(u_2)=1$. Since $a^n=0$ for some $n$, $\sigma(0)=1$, and $\sigma(x)=\sigma(x)\sigma(0)=\sigma(x0)=\sigma(0)=1$ for every $x$. So $\sigma\equiv1$, and $(\id,\id)$ is an arrow $m\to\tilde m$ for all $m,\tilde m$: $E$ is universal.
\end{proof}

\noindent The mechanism is that a functor acts through homomorphisms, which cannot make a non-unit behave like a unit without collapsing its powers, and in a nil monoid the powers reach $0$, which absorbs everything: $E_0$ identifies $a$ with $1$ but not $a^2=0$ with $1^2=1$. Since $\B M$ is strongly connected, $E_0$ is realized by an arbitrary $\sigma$ (Proposition \ref{proposition-universality}); so completely functorial data are strictly weaker than arbitrary ones. The obstruction is specific to the relation $\simeq$, which asks for comparisons in both directions: for the coarser relation generated by comparisons in either direction (level $0$ of Remark \ref{remark-levels}), $E_0$ is realized by an explicit completely functorial datum into a non-commutative monoid presented by generators $X,Y$ and relations $X^3=X^2$, $Y^3=Y^2$, $(XY)^3=(XY)^2$.

\section{Other constructions and examples, in brief}
\label{section-brief}

\noindent \textbf{Limit relations.} Many relations hold only in the limit. If each comparison carries a cost (the distance between the two sides of the $2$-morphism), one can ask that the infimum of the costs vanish; this is the language of categories enriched over $[0,\infty]$ \cite{Lawvere1973}. Applied to a transport datum with a metric on the observations, it gives equality of orbit closures, Hausdorff closures of subsets, approximate unitary equivalence of operators (where the exact relation, unitary equivalence, is strictly finer in infinite dimension \cite{Voiculescu1976}), and, with $\mathcal C$ a point and sequences as observations, the equivalence of Cauchy sequences by which the real numbers are built from the rationals. In general every limit relation with lengths zero is the pullback of closure equality along an orbit map.

\medskip
\noindent \textbf{Stabilization relations.} Two objects are \emph{stably} related if they become related after adding the same auxiliary summand. If the stabilization $\Sigma$ is compatible with the datum, the sequence $\mathfrak D\xrightarrow{\Sigma}\mathfrak D\xrightarrow{\Sigma}\cdots$ has a colimit which is again a comparison datum, and its relation $\simeq$ is the stable relation. This gives stable isomorphism of projective modules (equality in $K_0$), Witt equivalence of quadratic spaces \cite{MilnorHusemoller1973}, and stable homotopy type of finite complexes, and shows that these notions are of the same nature as the exact ones.

\medskip
\noindent \textbf{Examples and counterexamples.} (i) In the Brandt monoid obtained by adjoining a unit to $\{(i,j)\}_{i,j\in\{1,2\}}\cup\{0\}$, the elements $(1,1)$ and $(1,2)$ are $\GJ$-related but multiplying on the right by $(2,2)$ gives $0$ and $(1,2)$, which are not; so $\GJ$ is not a congruence. (ii) For finite-dimensional vector spaces with the invertible Green datum ($\mathcal U_i$ the isomorphisms), $\simeq$ relates two linear maps iff they have the same rank. (iii) On $\mathbb Z/3$ the relation $\{0,1\},\{2\}$ is completely functorial but neither a congruence nor a double coset relation, as mentioned above; and conjugacy of $S_3$ is completely functorial but not diagonal, because in a diagonal relation on a finite group the class of the identity is a product of two subgroups, which would have to be $\{e\}$, giving equality. (iv) Arbitrary $\sigma$ realizes the relation $E_0$ of Theorem \ref{theorem-nil}, and the first examples of strictness along the regularity axis are obtained from (iv) and Theorem \ref{theorem-nil}.

\section{Conclusion and open problems}
\label{section-conclusion}

\noindent Every comparison category is a Grothendieck construction restricted along a map $s$, and the classical relations are read off it: Green's relations are the orbit relations of the two-sided action of a category on its own morphisms, and orbit relations are the same construction after adjoining a point. The shape of the comparison category is unconstrained, so what constrains the relations that occur on a fixed category is the regularity of $\sigma,\tau_1,\tau_2$. Rigid and diagonal data give Green-type relations and double cosets; completely functorial data first contain conjugacy and twisted conjugacy, are described on finite groups by double cosets pulled back along homomorphisms, and obey a dichotomy on nil monoids which separates them from arbitrary $\sigma$.

\medskip
\noindent \textbf{Open problems.} (1) Is every equivalence relation on a finite group the relation $\simeq$ of a completely functorial strict datum? (2) Is every equivalence relation on a monoid the connected-component relation (level $0$) of a completely functorial datum? It holds for $E_0$ and on $\mathbb Z/n$, $n\le4$. (3) Do the pseudofunctorial and lax classes differ from the completely functorial one? (4) Is Green's $\GD$ the relation $\simeq$ of a single comparison datum of some regularity? (5) Does stabilization admit a description by a single transport datum, without passing to a colimit?

\section*{Declaration of generative AI use}
This article was generated by a generative artificial intelligence system (Claude Sonnet 5.5, Anthropic, October 2026) from an initial draft written by the author. The system restructured and rewrote the text, proposed and formulated the definitions, statements and proofs of the new results, carried out the supporting computations, and produced the \LaTeX{} source. The research questions, the initial draft and the scientific direction were provided by the author, who directed the generation through successive instructions and revisions. The author takes full responsibility for the content, and the AI system is not listed as an author.

\section*{Ethics declaration}
Not applicable.

\section*{Funding declaration}
This research received no specific grant from any funding agency in the public, commercial, or not-for-profit sectors.

\section*{Competing interests declaration}
The author declares no competing interests.


\begin{thebibliography}{99}

\bibitem{Green1951} J.~A. Green, \emph{On the structure of semigroups}, Annals of Mathematics \textbf{54} (1951), no.~1, 163--172.

\bibitem{Howie1995} J.~M. Howie, \emph{Fundamentals of semigroup theory}, London Mathematical Society Monographs, Oxford University Press, 1995.

\bibitem{Lawvere1973} F.~W. Lawvere, \emph{Metric spaces, generalized logic, and closed categories}, Rend. Sem. Mat. Fis. Milano \textbf{43} (1973), 135--166.

\bibitem{MacLane1971} S.~Mac~Lane, \emph{Categories for the working mathematician}, Graduate Texts in Mathematics, vol.~5, Springer, 1971.

\bibitem{MFK1994} D.~Mumford, J.~Fogarty, F.~Kirwan, \emph{Geometric invariant theory}, 3rd ed., Springer, 1994.

\bibitem{MilnorHusemoller1973} J.~Milnor, D.~Husemoller, \emph{Symmetric bilinear forms}, Springer, 1973.

\bibitem{Thomason1979} R.~W. Thomason, \emph{Homotopy colimits in the category of small categories}, Math. Proc. Cambridge Philos. Soc. \textbf{85} (1979), 91--109.

\bibitem{Voiculescu1976} D.~Voiculescu, \emph{A non-commutative Weyl--von Neumann theorem}, Rev. Roumaine Math. Pures Appl. \textbf{21} (1976), 97--113.

\end{thebibliography}
\end{document}